\theoremstyle{plain}
\newtheorem{theorem}{Theorem}[section]
\theoremstyle{remark}
\theoremstyle{definition}
\newtheorem{definition}{Definition}
\def\a{{\alpha(\cdot,\cdot)}}
\def\b{{\beta(\cdot,\cdot)}}
\def\t{\tau}
\def\e{\epsilon}
\def\LI{{_aI_t^{\a}}}
\def\RI{{_tI_b^{\a}}}
\def\LDa{{_aD_t^{\a}}}
\def\LDb{{_aD_t^{\b}}}
\def\RDa{{_tD_b^{\a}}}
\def\RDb{{_tD_b^{\b}}}
\def\LC{{^C_aD_t^{\a}}}
\def\RCa{{^C_tD_b^{\a}}}
\def\RCb{{^C_tD_b^{\b}}}
\def\DC{{^CD_\gamma^{\a,\b}}}
\begin{document}

\title{Optimality Conditions for Fractional Variational Problems
with Dependence on a Combined Caputo Derivative of Variable Order}

\author{Dina Tavares$^{\rm a}$$^{\rm b}$,
Ricardo Almeida$^{\rm b}$$^{\ast}$\thanks{$^\ast$Corresponding author.
Email: ricardo.almeida@ua.pt\vspace{6pt}}
and Delfim F. M. Torres$^{\rm b}$\\\vspace{6pt}
$^{a}${\em{Polytechnic Institute of Leiria, 2410--272 Leiria, Portugal}};
$^{b}${\em{Center for Research and Development in Mathematics and Applications (CIDMA),
Department of Mathematics, University of Aveiro, 3810--193 Aveiro, Portugal}}\\
\received{Submitted Feb 5, 2014; Revised May 16, July 14 and Dec 16, 2014; Accepted Jan 8, 2015}}

\maketitle


\begin{abstract}
We establish necessary optimality conditions for variational
problems with a Lagrangian depending on a combined
Caputo derivative of variable fractional order. The
endpoint of the integral is free, and thus transversality conditions
are proved. Several particular cases are considered illustrating
the new results.

\begin{keywords}
dynamic optimization; fractional calculus;
variable fractional order; fractional calculus of variations.
\end{keywords}

\begin{classcode}
26A33; 34A08; 49K05.
\end{classcode}
\end{abstract}


\section{Introduction}

The fractional calculus of variations deals with optimization of functionals
that depend on some fractional operator \cite{book:FCV,book2:FCV}. This is a fast growing subject,
and different approaches have been developed by considering different types of Lagrangians,
e.g., depending on Riemann--Liouville or Caputo fractional derivatives,
fractional integrals, and mixed integer-fractional order operators
\cite{AlTorres,Atanackovic,Baleanu1,Baleanu2,Cresson,Rabei,Tarasov}.
The common procedure to address such fractional variational problems consists in solving
a fractional differential equation, called the Euler--Lagrange equation,
which every minimizer/maximizer of the functional must satisfy.
With the help of the boundary conditions imposed on the problem
at the initial time $t=a$ and at the terminal time $t=b$,
one solves, often with the help of some numerical procedure, the fractional differential
equation and obtain the possible solution to the problem \cite{MyID:225,MyID:258,RE1,RE2,RE3,RE4}.
When one of the boundary conditions is missing, that is, when the set of admissible functions
may take any value at one of the boundaries, then an auxiliary condition,
known as a transversality condition, needs to be obtained in order
to solve the fractional equation \cite{Alm:Malin}.
In this paper we do not only assume that $x(b)$ is free,
but the endpoint $b$ is also variable. Thus, we are not only interested
in finding an optimal curve $x(\cdot)$ but also the endpoint
of the variational integral, denoted in the sequel by $T$. This is known
in the literature as a free-time problem, and is already well
studied for integer-order derivatives (see, e.g., \cite{Chiang}).
Moreover, we consider a combined Caputo fractional derivative, that involves the left
and the right Caputo fractional derivative \cite{Malin:Tor,MyID:206,MyID:207}.
Also, the order of the fractional operator is not a constant,
and may depend on time \cite{MyID:280,Tatiana:IDOTA2011,Samko:1995}.
Very useful physical applications have given birth to the variable order fractional calculus,
for example in modeling mechanical behaviors \cite{MR3017585,MR2926061,Ref:ref3}.
Nowadays, variable order fractional calculus is particularly recognized as a useful
and promising approach in the modelling of diffusion processes, in order
to characterize time-dependent or concentration-dependent anomalous diffusion,
or diffusion processes in inhomogeneous porous media \cite{MR2926061}.
The importance of such variable order operators is due to the fact
that the behaviors of dynamical systems often change with their own evolution \cite{Ref:ref3}.
For numerical modeling of time fractional diffusion equations
we refer the reader to \cite{MR3017585}.
Our variational problem may describe better some nonconservative physical phenomena,
since fractional operators are nonlocal and contain memory
of the process \cite{Caputo,Riewe96,Riewe97}. We believe that it is reasonable
that the order of the fractional operators is influenced also by
the acting dynamics, which varies with time, and we trust that the results
now obtained can be more effective to provide a mathematical framework
to some complex dynamical problems \cite{Coimbra,Coimbra2,Ramirez2,Sheng,Sun}.

The paper is organized in the following way. In Section~\ref{sec:FC}
we introduce the basic concepts of the combined variable-order fractional calculus,
and we recall two formulas of fractional integration by parts (Theorem~\ref{thm:FIP}).
The problem is then stated in Section~\ref{sec:theorems}, consisting
of the variational functional
$$
\mathcal{J}(x,T)=\int_a^T L\left(t, x(t), \DC x(t)\right)dt+\phi(T,x(T)),
$$
where $\DC x(t)$ stands for the combined Caputo fractional derivative
of variable fractional order (Definition~\ref{def1}), subject to the
boundary condition $x(a)=x_a$. Our goal is to find necessary optimality conditions
that every extremizer $(x,T)$ must satisfy.
The main results of the paper provide necessary optimality conditions of Euler--Lagrange type,
described by fractional differential equations of variable order,
and different transversality optimality conditions
(see Theorems~\ref{teo1} and \ref{teo2}). Some particular cases of interest
are considered in Section~\ref{sec:part:cases}.
We end with two illustrative examples (Section~\ref{sec:ex}).


\section{Fractional calculus of variable order}
\label{sec:FC}

In this section we present the fundamental notions of the fractional calculus of variable order.
We consider the order of the derivative and of the integral to be a continuous
function $\alpha(\cdot,\cdot)$ with domain $[a,b]^2$, taking values on the open interval $(0,1)$.
Following \cite{Tatiana:IDOTA2011}, given a function $x:[a,b]\to\mathbb{R}$ we define:
\begin{itemize}
\item the left and right Riemann--Liouville fractional integrals of order $\a$ by
$$
\LI x(t)=\int_a^t \frac{1}{\Gamma(\alpha(t,\t))}(t-\t)^{\alpha(t,\t)-1}x(\t)d\t
$$
and
$$
\RI x(t)=\int_t^b\frac{1}{\Gamma(\alpha(\t,t))}(\t-t)^{\alpha(\t,t)-1} x(\t)d\t,
$$
respectively;
\item the left and right Riemann--Liouville fractional derivatives of order $\a$ by
$$
\LDa x(t)=\frac{d}{dt}\int_a^t \frac{1}{\Gamma(1-\alpha(t,\t))}(t-\t)^{-\alpha(t,\t)}x(\t)d\t
$$
and
$$
\RDa x(t)=\frac{d}{dt}\int_t^b\frac{-1}{\Gamma(1-\alpha(\t,t))}(\t-t)^{-\alpha(\t,t)} x(\t)d\t,
$$
respectively;
\item the left and right Caputo fractional derivatives of order $\a$ by
$$
\LC x(t)=\int_a^t\frac{1}{\Gamma(1-\alpha(t,\t))}(t-\t)^{-\alpha(t,\t)}x^{(1)}(\t)d\t
$$
and
$$
\RCa x(t)=\int_t^b\frac{-1}{\Gamma(1-\alpha(\t,t))}(\t-t)^{-\alpha(\t,t)}x^{(1)}(\t)d\t,
$$
respectively.
\end{itemize}

Motivated by the combined fractional Caputo and Riemann--Liouville definitions
\cite{Malin:Tor,MyID:206,MyID:207}, we propose the following definitions.

\begin{definition}
\label{def1}
Let $\alpha, \, \beta: [a,b]^2\rightarrow(0,1)$ and $\gamma=(\gamma_1,\gamma_2) \in [0,1]^{2}$.
The combined Riemann--Liouville fractional derivative operator $D_\gamma^{\a,\b}$ is defined by
$$
D_\gamma^{\a,\b}=\gamma_1 \, \LDa + \gamma_2 \, \RDb,
$$
acting on $x\in C([a,b])$ in the following way:
$$
D_\gamma^{\a,\b}x(t)=\gamma_1 \, \LDa x(t)+\gamma_2 \, \RDb x(t).
$$
Analogously, the combined Caputo fractional derivative operator,
denoted by $^{C}D_\gamma^{\a,\b}$, is defined by
$$
^CD_\gamma^{\a,\b}=\gamma_1 \, \LC+\gamma_2 \, \RCb,
$$
acting on $x\in C^{1}([a,b])$ as expected:
$$
^{C}D_\gamma^{\a,\b}x(t)=\gamma_1 \, \LC x(t)+\gamma_2 \, \RCb x(t).
$$
\end{definition}

The following theorem is proved in \cite{Od} and is a generalization
of the standard fractional formula of integration by parts for a constant $\alpha$
(see, e.g., formula (2) of \cite{Alm:Malin}).

\begin{theorem}[Theorem 3.2 of \cite{Od}]
\label{thm:FIP}
If $x,y \in C^1[a,b]$, then the fractional integration by parts formulas
$$
\int_{a}^{b}y(t) \, \LC x(t)dt
=\int_a^b x(t) \, {\RDa}y(t)dt
+\left[x(t) \, {_tI_b^{1-\a}}y(t) \right]_{t=a}^{t=b}
$$
and
$$
\int_{a}^{b}y(t) \, {\RCa}x(t)dt
=\int_a^b x(t) \, {\LDa} y(t)dt
-\left[x(t) \, {_aI_t^{1-\a}}y(t)\right]_{t=a}^{t=b}
$$
hold.
\end{theorem}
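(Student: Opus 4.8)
The plan is to prove the first integration-by-parts formula directly from the definition of the left Caputo derivative $\LC x(t)$, and then obtain the second formula by an analogous computation (or by a symmetry argument exchanging the roles of the left and right operators). Starting from the left-hand side $\int_a^b y(t)\,\LC x(t)\,dt$, I would substitute the integral definition
$$
\LC x(t)=\int_a^t\frac{1}{\Gamma(1-\alpha(t,\t))}(t-\t)^{-\alpha(t,\t)}x^{(1)}(\t)\,d\t,
$$
so that the left-hand side becomes a double integral over the triangular region $\{(t,\t):a\le\t\le t\le b\}$. The first key step is to apply Fubini's theorem to interchange the order of integration, rewriting the region as $\{(t,\t):a\le\t\le b,\ \t\le t\le b\}$. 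After the swap, the inner integral in $t$ produces precisely the right Riemann--Liouville fractional integral of $y$ evaluated through the kernel $(t-\t)^{-\alpha(t,\t)}/\Gamma(1-\alpha(t,\t))$, which (after relabelling variables) matches $\,_\tau I_b^{1-\a}y$.

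The second key step is an ordinary integration by parts in the variable $\t$, to move the derivative off $x^{(1)}(\t)$ and onto the fractional integral of $y$. This is where the boundary term $\left[x(t)\,{_tI_b^{1-\a}}y(t)\right]_{t=a}^{t=b}$ arises, and where the right Riemann--Liouville \emph{derivative} $\RDa y$ appears: differentiating the fractional integral of $y$ with respect to the remaining variable and recognising the $\frac{d}{dt}$ in the definition of $\RDa$ yields the term $\int_a^b x(t)\,\RDa y(t)\,dt$. I would then check carefully that the kernel exponent $1-\alpha$ and the Gamma factor $\Gamma(1-\alpha(\cdot,\cdot))$ track correctly through the variable-order bookkeeping, since here $\alpha$ depends on both arguments and the symmetry $\alpha(t,\t)$ versus $\alpha(\t,t)$ must be respected when identifying left versus right operators.

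For the second formula I would proceed symmetrically: expand $\RCa x(t)$ as an integral over $\{(t,\t):t\le\t\le b\}$, apply Fubini to reverse the order, integrate by parts in $\t$, and collect the resulting $\int_a^b x(t)\,\LDa y(t)\,dt$ together with the boundary contribution $-\left[x(t)\,{_aI_t^{1-\a}}y(t)\right]_{t=a}^{t=b}$; the sign change relative to the first formula comes from the $-1$ factor in the definition of the right operators and the orientation of the $\t$-integration. Since the statement attributes the result to Theorem 3.2 of reference \cite{Od}, I would either reproduce this Fubini-plus-parts argument or simply cite it; the hypothesis $x,y\in C^1[a,b]$ guarantees all the interchanges and the single integration by parts are justified.

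The main obstacle I anticipate is the justification of Fubini's theorem and the differentiation-under-the-integral step in the variable-order setting, because the kernel $(t-\t)^{-\alpha(t,\t)}$ has a singularity along the diagonal $t=\t$ whose strength now varies with position. I would need to confirm that the integrand is absolutely integrable over the triangle despite this moving singularity (using $0<\alpha<1$ so the exponent $-\alpha$ stays strictly above $-1$, making the singularity integrable uniformly), and that the continuity assumptions on $\alpha$ and the $C^1$ regularity of $x,y$ make the passage from the Riemann--Liouville integral of $y$ to its Riemann--Liouville derivative legitimate. Carefully matching the two arguments of $\alpha(\cdot,\cdot)$ to the correct left/right operator is the delicate bookkeeping that distinguishes this variable-order statement from the classical constant-order formula.
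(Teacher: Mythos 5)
Your proposal is correct, but note that the paper itself contains no proof of this statement: Theorem~\ref{thm:FIP} is imported by citation as Theorem~3.2 of \cite{Od}, with only the remark that it generalizes the constant-order formula. So the honest comparison is between your sketch and the proof in the cited reference, which follows exactly the route you describe: write $\LC x$ as an integral against $x^{(1)}$, swap the order of integration over the triangle (Fubini/Dirichlet), recognize the inner integral as ${}_tI_b^{1-\a}y$, then perform one classical integration by parts and absorb the resulting $-\frac{d}{dt}\,{}_tI_b^{1-\a}y(t)$ into the definition of $\RDa y(t)$, whose built-in minus sign produces the plus sign in front of $\int_a^b x(t)\,\RDa y(t)\,dt$; the mirrored computation, carrying the $-1$ of the right Caputo operator, gives the second formula with its minus-signed boundary term. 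Your variable-order bookkeeping is also right: after the swap, the kernel carries the integration variable in the first slot of $\alpha$, which is precisely the convention in the paper's definitions of the right integral and derivative, so the operators match. The two technical points you flag are indeed the only places where care beyond $x,y\in C^1$ is needed: absolute integrability of $(t-\tau)^{-\alpha(t,\tau)}$ near the diagonal holds uniformly because $\alpha$ is continuous on the compact square with values in $(0,1)$, hence bounded away from $1$; and the differentiability (absolute continuity) in $t$ of ${}_tI_b^{1-\a}y(t)$, needed for the classical parts step, is tacitly assumed by the statement itself, since it presumes the Riemann--Liouville derivatives of $y$ exist. In short, your argument is the standard and correct one; the paper simply defers it to \cite{Od}.
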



\section{Necessary optimality conditions}
\label{sec:theorems}

Consider the norm defined on the linear space $ C^1([a,b])\times \mathbb{R}$ by
$$
\|(x,t)\|:=\max_{a\leq t \leq  b}|x(t)|+\max_{a\leq t \leq b}\left| \DC x(t)\right|+|t|.
$$
Let $D$ denote the subset $ C^1([a,b])\times [a,b]$ endowed with the norm $\|(\cdot,\cdot)\|$,
such that $\DC x(t)$ exists and is continuous on the interval $[a,b]$.

\begin{definition}
We say that $(x^\star,T^\star)\in D$ is a local minimizer to the functional
$\mathcal{J}:D\rightarrow \mathbb{R}$ if there exists some $\epsilon>0$ such that
$$
\forall (x,T)\in D \, : \quad \|(x^\star,T^\star)-(x,T)\|<\epsilon\Rightarrow J(x^\star,T^\star)\leq J(x,T).
$$
\end{definition}

Along the work, we denote by $\partial_i z$, $i\in \{1,2,3\}$, the partial derivative
of a function $z:\mathbb{R}^{3} \rightarrow\mathbb{R}$ with respect to its $i$th argument,
and by $L$ the \emph{Lagrangian} $L:C^{1}\left([a,b]\times \mathbb{R}^2 \right)\to\mathbb{R}$.
For simplicity of notation, we introduce the operator $[\cdot]_\gamma^{\alpha, \beta}$ defined by
$$
[x]_\gamma^{\alpha, \beta}(t)=\left(t, x(t), \DC x(t)\right).
$$

Consider the following problem of the calculus of variations:
find the local minimizers of the functional $\mathcal{J}:D\rightarrow \mathbb{R}$, with
\begin{equation}
\label{funct1}
\mathcal{J}(x,T)=\int_a^T L[x]_\gamma^{\alpha, \beta}(t) dt + \phi(T,x(T)),
\end{equation}
over all $(x,T)\in D$ satisfying the boundary condition $x(a)=x_a$,
for a fixed $x_a\in \mathbb{R}$. The terminal time $T$ and terminal state $x(T)$ are free.
The \emph{terminal cost function} $\phi:[a,b]\times \mathbb{R}\to\mathbb{R}$ is at least of class $C^1$.

The next theorem gives fractional necessary optimality conditions to problem \eqref{funct1}.
In the sequel, we need the auxiliary notation of the dual fractional derivative:
$$
D_{\overline{\gamma}}^{\b,\a}=\gamma_2 \, {_aD_t^{\b}}
+\gamma_1 \, {_tD_T^{\a}},
\quad \mbox{where} \quad \overline{\gamma}=(\gamma_2,\gamma_1).
$$

\begin{theorem}
\label{teo1}
Suppose that $(x,T)$ is a local minimizer to the functional \eqref{funct1} on $D$.
Then, $(x,T)$ satisfies the fractional Euler--Lagrange equations
\begin{equation}
\label{ELeq_1}
\partial_2 L[x]_\gamma^{\alpha, \beta}(t)
+D{_{\overline{\gamma}}^{\b,\a}}\partial_3 L[x]_\gamma^{\alpha, \beta}(t)=0,
\end{equation}
on the interval $[a,T]$, and
\begin{equation}
\label{ELeq_2}
\gamma_2\left({\LDb}\partial_3 L[x]_\gamma^{\alpha, \beta}(t)
-{ _TD{_t^{\b}}\partial_3 L[x]_\gamma^{\alpha, \beta}(t)}\right)=0,
\end{equation}
on the interval $[T,b]$. Moreover, $(x,T)$ satisfies the transversality conditions
\begin{equation}
\label{CT1}
\begin{cases}
L[x]_\gamma^{\alpha, \beta}(T)+\partial_1\phi(T,x(T))+\partial_2\phi(T,x(T))x'(T)=0,\\
\left[\gamma_1 \, {_tI_T^{1-\a}} \partial_3L[x]_\gamma^{\alpha, \beta}(t)
-\gamma_2 \, {_TI_t^{1-\b}} \partial_3 L[x]_\gamma^{\alpha, \beta}(t)\right]_{t=T}
+\partial_2 \phi(T,x(T))=0,\\
\gamma_2 \left[ {_TI_t^{1-\b}}\partial_3 L[x]_\gamma^{\alpha, \beta}(t)
-{_aI_t^{1-\b}\partial_3L[x]_\gamma^{\alpha, \beta}(t)}\right]_{t=b}=0.
\end{cases}
\end{equation}
\end{theorem}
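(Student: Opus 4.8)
The plan is to derive all the necessary conditions from a single first-variation computation. I would start by embedding the minimizer $(x,T)$ in a one-parameter family of admissible variations. Because both the curve and the terminal time are free, I need two independent perturbation parameters: replace $x$ by $x + \epsilon h$, where $h \in C^1([a,b])$ satisfies $h(a)=0$ (to respect the fixed boundary condition $x(a)=x_a$), and replace $T$ by $T + \epsilon \Delta T$ for $\Delta T \in \mathbb{R}$. Define $j(\epsilon) = \mathcal{J}(x+\epsilon h, T+\epsilon\Delta T)$ and use the fact that, since $(x,T)$ is a local minimizer, $j'(0)=0$ for every admissible pair $(h,\Delta T)$.

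First I would differentiate $j$ at $\epsilon=0$. This produces three groups of terms: the derivative of the integrand under the integral sign (involving $\partial_2 L$ and $\partial_3 L$ composed with the variation of $\DC x$), a boundary term coming from the variable upper limit $T$ (giving $L[x]_\gamma^{\alpha,\beta}(T)\,\Delta T$ by the Leibniz rule), and the derivative of the terminal cost $\phi(T+\epsilon\Delta T, (x+\epsilon h)(T+\epsilon\Delta T))$, which by the chain rule yields $\partial_1\phi\,\Delta T + \partial_2\phi\,(h(T)+x'(T)\Delta T)$. The key technical step is to handle the term $\int_a^T \partial_3 L \cdot \DC h\,dt$, where $\DC h = \gamma_1\, {}^C_aD_t^{\alpha}h + \gamma_2\, {}^C_tD_b^{\beta}h$. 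I would split the right Caputo derivative's integral over $[a,b]$ into the pieces over $[a,T]$ and $[T,b]$, and then apply the integration-by-parts formulas of Theorem~\ref{thm:FIP} on the appropriate subintervals to transfer the fractional derivatives off $h$ and onto $\partial_3 L$. This is where the dual operator $D_{\overline\gamma}^{\beta,\alpha}$ is born, and it is the step I expect to be the main obstacle: one must carefully track the Riemann--Liouville derivatives and the fractional-integral boundary terms evaluated at $t=a,T,b$, keeping the left/right orientations and the cross terms between $\alpha$ and $\beta$ straight.

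After the integration by parts, I would collect the result into an expression of the form (an integral over $[a,T]$ against $h$) plus (an integral over $[T,b]$ against $h$) plus (boundary terms evaluated at $T$ and $b$) plus ($\Delta T$ times a coefficient), all of which must vanish. The usual two-stage argument then applies. First, restrict to variations with $h(T)=0$ and $\Delta T=0$ and $h$ compactly supported inside $(a,T)$; by the fundamental lemma of the calculus of variations, the integrand on $[a,T]$ must vanish, giving the Euler--Lagrange equation \eqref{ELeq_1}. A separate localization on $(T,b)$, where the cost functional does not ``see'' $h$ directly but the right Caputo operator extends past $T$, forces the second Euler--Lagrange equation \eqref{ELeq_2} involving only the $\gamma_2$ piece.

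Finally, with both Euler--Lagrange equations in hand, the interior integrals drop out and what remains is a linear form in the free data $h(T)$, $\Delta T$, and the residual boundary contributions at $t=b$, each of which may be varied independently. Setting the coefficient of $\Delta T$ to zero yields the first transversality condition in \eqref{CT1}; setting the coefficient of $h(T)$ to zero yields the second (the one involving ${}_tI_T^{1-\alpha}$ and ${}_TI_t^{1-\beta}$ at $t=T$ together with $\partial_2\phi$); and the remaining boundary term at $t=b$, which is forced to vanish because $h(b)$ is itself free, produces the third condition. The delicacy throughout is that $h(T)$ and $\Delta T$ are genuinely independent, so I must be careful not to conflate $h(T)$ with the total variation of $x$ at the moving endpoint; keeping that distinction explicit is what cleanly separates the second transversality condition from the first.
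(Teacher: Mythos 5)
Your proposal follows essentially the same route as the paper's own proof: the same one-parameter variation $(x+\epsilon h, T+\epsilon\Delta T)$ with $h(a)=0$, the same splitting of the right Caputo term via $\int_a^T = \int_a^b - \int_T^b$ followed by the integration-by-parts formulas of Theorem~\ref{thm:FIP}, the same localization of variations to obtain \eqref{ELeq_1} and \eqref{ELeq_2}, and the same independence of $h(T)$, $\Delta T$, and $h(b)$ to extract the three conditions in \eqref{CT1}. The plan is correct and faithful to the paper's argument.
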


\begin{proof}
Let $(x,T)$ be a solution to the problem and
$\left(x+\e{h},T+\e\Delta{T}\right)$ be an admissible variation,
where $h\in C^1([a,b])$ is a perturbing curve,
$\triangle T \in\mathbb{R}$ represents an arbitrarily chosen
small change in $T$ and $\e \in\mathbb{R}$ represents
a small number $\left(\e\rightarrow 0\right)$.
The constraint $x(a)=x_a$ implies that all admissible variations
must fulfill the condition $h(a)=0$.
Define $j(\cdot)$ on a neighborhood of zero by
\begin{equation*}
\begin{split}
j(\e) &=\mathcal{J}(x+\e h,T+\e \triangle T)\\
&=\int_a^{T+\e \triangle T} L[x+\e h]_\gamma^{\alpha, \beta}(t)\,dt
+\phi\left(T+\e \triangle T,(x+\e h)(T+\e \triangle T)\right).
\end{split}
\end{equation*}
The derivative $j'(\e)$ is
\begin{equation*}
\begin{split}
j'(\e)&=\int_a^{T+\e \triangle T} \left(
\partial_2 L[x+\e h]_\gamma^{\alpha, \beta}(t) h(t)
+ \partial_3 L[x+\e h]_\gamma^{\alpha, \beta}(t) \DC h(t) \right)dt\\
&\quad +L[x + {\e h}]_\gamma^{\alpha, \beta}(T + \e \Delta T)\Delta T
+ \partial_1 \phi\left(T+\e \triangle T,
(x+\e h)(T+\e \triangle T)\right) \, \Delta T\\
&\quad +\partial_2\phi\left(T+\e \triangle T,
(x+\e h)(T+\e \triangle T)\right) \, (x+\e h)'(T+\e \triangle T).
\end{split}
\end{equation*}
Considering the differentiability properties of $j$, a necessary condition
for $(x,T)$ to be a local extremizer is given by
$\left. j'(\e) \right|_{\e=0}=0$, that is,
\begin{multline}
\label{eq_derj}
\int_a^T \left( \partial_2 L[x]_\gamma^{\alpha, \beta}(t) h(t)
+ \partial_3 L[x]_\gamma^{\alpha, \beta}(t) \DC h(t) \right)dt
+L[x]_\gamma^{\alpha, \beta}(T)\Delta T\\
+\partial_1 \phi \left(T, x(T)\right)\Delta T
+ \partial_2\phi(T, x(T))\left[h(t)+x'(T) \triangle T \right]=0.
\end{multline}
The second addend of the integral function \eqref{eq_derj},
\begin{equation}
\label{term}
\int_a^T \partial_3 L[x]_\gamma^{\alpha, \beta}(t) \DC h(t) dt,
\end{equation}
can be written, using the definition of combined
Caputo fractional derivative, as
\begin{equation*}
\begin{split}
\int_a^T & \partial_3 L[x]_\gamma^{\alpha, \beta}(t) \DC h(t) dt\\
&=\int_a^T \partial_3 L[x]_\gamma^{\alpha, \beta}(t)\left[\gamma_1
\, \LC h(t)+\gamma_2 \, \RCb h(t)\right]dt\\
&=\gamma_1 \int_a^T \partial_3 L[x]_\gamma^{\alpha, \beta}(t)\LC h(t)dt  \\
&\quad + \gamma_2 \left[ \int_a^b \partial_3 L[x]_\gamma^{\alpha, \beta}(t) \RCb h(t)dt
- \int_T^b \partial_3 L[x]_\gamma^{\alpha, \beta}(t) \RCb h(t)dt \right].
\end{split}
\end{equation*}
Integrating by parts (see Theorem~\ref{thm:FIP}), and since $h(a)=0$,
the term \eqref{term} can be written as
\begin{equation*}
\begin{split}
\gamma_1 & \left[ \int_a^T h(t) _tD_T^{\a} \partial_3
L[x]_\gamma^{\alpha, \beta}(t) dt
+\left[h(t) {_t I_T^{1-\a}\partial_3 L[x]_\gamma^{\alpha, \beta}(t)}\right]_{t=T} \right]\\
&+ \gamma_2 \Biggl[ \int_a^b h(t){_aD_t^{\b}} \partial_3 L[x]_\gamma^{\alpha, \beta}(t) dt
- \left[h(t){_a I_t^{1-\b}\partial_3 L[x]_\gamma^{\alpha, \beta}(t)}\right]_{t=b} \\
&\qquad \quad -\Biggl( \int_T^b h(t){_TD_t^{\b}} \partial_3 L[x]_\gamma^{\alpha, \beta}(t) dt
- \left[h(t){_TI_t^{1-\b}\partial_3 L[x]_\gamma^{\alpha, \beta}(t)}\right]_{t=b}\\
&\qquad \qquad \quad +\left[h(t){_TI_t^{1-\b}
\partial_3 L[x]_\gamma^{\alpha, \beta}(t)}\right]_{t=T} \Biggr) \Biggr].
\end{split}
\end{equation*}
Unfolding these integrals, and considering the fractional operator
$D_{\overline{\gamma}}^{\b,\a}$  with
$\overline{\gamma}=(\gamma_2,\gamma_1)$,
then \eqref{term} is equivalent to
\begin{equation*}
\begin{split}
\int_a^T h(t) & D_{\overline{\gamma}}^{\b,\a}\partial_3L[x]_\gamma^{\alpha, \beta}(t)dt\\
&+ \int_T^b\gamma_2h(t)\left[_aD_t^{\b}\partial_3 L[x]_\gamma^{\alpha, \beta}(t)
-{_TD_t^{\b}\partial_3L[x]_\gamma^{\alpha, \beta}(t)}\right]dt\\
&+\left[h(t)\left(\gamma_1 \, {_tI_T^{1-\a}\partial_3L[x]_\gamma^{\alpha, \beta}(t)}
-{\gamma_2 \, {_TI_t^{1-\b}\partial_3L[x]_\gamma^{\alpha, \beta}(t)}}\right)\right]_{t=T}\\
&+\left[ h(t)\gamma_2\left({_TI_t^{1-\b}\partial_3 L[x]_\gamma^{\alpha, \beta}(t)}
-{_aI_t^{1-\b}\partial_3L[x]_\gamma^{\alpha, \beta}(t)}\right)\right]_{t=b}.
\end{split}
\end{equation*}
Substituting these relations into equation \eqref{eq_derj}, we obtain
\begin{equation}
\label{eq_derj2}
\begin{split}
0= &\int_a^Th(t)\left[\partial_2 L[x]_\gamma^{\alpha, \beta}(t)
+ D_{\overline{\gamma}}^{\b,\a}\partial_3 L[x]_\gamma^{\alpha, \beta}(t)\right]dt\\
&+ \int_T^b \gamma_2 h(t) \left[_aD_t^{\b}\partial_3 L[x]_\gamma^{\alpha, \beta}(t)
-{_TD_t^{\b}\partial_3 L[x]_\gamma^{\alpha, \beta}(t)}\right]dt\\
&+ h(T)\left[\gamma_1 \, {_tI_T^{1-\a}\partial_3L[x]_\gamma^{\alpha, \beta}(t)}
-{\gamma_2 \, {_TI_t^{1-\b}\partial_3L[x]_\gamma^{\alpha, \beta}(t)}}
+\partial_2 \phi(t,x(t))\right]_{t=T}\\
&+\Delta T \left[ L[x]_\gamma^{\alpha, \beta}(t)+\partial_1\phi(t,x(t))
+\partial_2\phi(t,x(t))x'(t)  \right]_{t=T}\\
&+ h(b)\left[ \gamma_2  \left( _TI_t^{1-\b}\partial_3 L[x]_\gamma^{\alpha, \beta}(t)
-{_aI_t^{1-\b}\partial_3L[x]_\gamma^{\alpha, \beta}(t)}\right) \right]_{t=b}.
\end{split}
\end{equation}
As $h$ and $\triangle T$ are arbitrary, we can choose $\triangle T=0$ and $h(t)=0$,
for all $t\in[T,b]$, but $h$ is arbitrary in $t\in[a,T)$.
Then, for all $t\in[a,T]$, we obtain the first necessary condition \eqref{ELeq_1}:
$$
\partial_2 L[x]_\gamma^{\alpha, \beta}(t)
+D{_{\overline{\gamma}}^{\b,\a}}\partial_3 L[x]_\gamma^{\alpha, \beta}(t)=0.
$$
Analogously, considering $\triangle T=0$, $h(t)=0$,
for all $t\in[a,T]$, and $h$ arbitrary on $(T,b]$,
we obtain the second necessary condition \eqref{ELeq_2}:
$$
\gamma_2\left({\LDb}\partial_3L[x]_\gamma^{\alpha, \beta}(t)
-{ _TD{_t^{\b}}\partial_3L[x]_\gamma^{\alpha, \beta}(t)}\right)=0.
$$
As $(x,T)$ is a solution to the necessary conditions \eqref{ELeq_1}
and \eqref{ELeq_2}, then equation \eqref{eq_derj2} takes the form
\begin{equation}
\label{eq_derj3}
\begin{split}
0=&h(T)\left[\gamma_1 \, {_tI_T^{1-\a}\partial_3L[x]_\gamma^{\alpha, \beta}(t)}
-{\gamma_2 \, {_TI_t^{1-\b}\partial_3L[x]_\gamma^{\alpha, \beta}(t)}}
+\partial_2 \phi(t,x(t))\right]_{t=T}\\
&+\Delta T \left[ L[x]_\gamma^{\alpha, \beta}(t)+\partial_1\phi(t,x(t))
+\partial_2\phi(t,x(t))x'(t)  \right]_{t=T}\\
&+ h(b)\left[\gamma_2  \left( _TI_t^{1-\b}\partial_3 L[x]_\gamma^{\alpha, \beta}(t)
-{_aI_t^{1-\b}\partial_3L[x](t)}\right) \right]_{t=b}.
\end{split}
\end{equation}
Transversality conditions \eqref{CT1}
are obtained for appropriate choices of variations.
\end{proof}

In the next theorem, considering the same problem \eqref{funct1},
we rewrite the transversality conditions \eqref{CT1} in terms
of the increment on time $\Delta T$  and on the consequent increment
on $x$ and $\Delta x_T$, given by
\begin{equation}
\label{xt}
\Delta x_T = (x+h)\left( T + \Delta T \right) - x(T).
\end{equation}

\begin{theorem}
\label{teo2}
Let $(x,T)$ be a local minimizer to the functional \eqref{funct1} on $D$.
Then, the fractional Euler--Lagrange equations \eqref{ELeq_1}
and \eqref{ELeq_2} are satisfied together with
the following transversality conditions:
\begin{equation}
\label{CT2}
\begin{cases}
L[x]_\gamma^{\alpha, \beta}(T)+\partial_1\phi(T,x(T))\\
\qquad + x'(T) \left[ \gamma_2 {_TI}_t^{1-\b} \partial_3L[x]_\gamma^{\alpha, \beta}(t)
- \gamma_1 {_tI_T^{1-\a} \partial_3L[x]_\gamma^{\alpha, \beta}(t)} \right]_{t=T} =0,\\
\left[ \gamma_1\, {_tI_T^{1-\a}} \partial_3L[x]_\gamma^{\alpha, \beta}(t)
- \gamma_2\, {_TI_t^{1-\b}} \partial_3L[x]_\gamma^{\alpha, \beta}(t)\right]_{t=T}
+\partial_2 \phi(T,x(T))=0,\\
\gamma_2 \left[ _TI_t^{1-\b}\partial_3 L[x]_\gamma^{\alpha, \beta}(t)
-{_aI_t^{1-\b}\partial_3L[x]_\gamma^{\alpha, \beta}(t)}\right]_{t=b}=0.
\end{cases}
\end{equation}
\end{theorem}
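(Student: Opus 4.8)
The plan is to avoid repeating the entire variational computation and instead to start from the first-variation identity \eqref{eq_derj3} already established in the proof of Theorem~\ref{teo1}, which holds for \emph{every} admissible variation $(x+\e h, T+\e\Delta T)$. That identity has the schematic shape $0 = h(T)\,A + \Delta T\,B + h(b)\,C$, where $A$, $B$, $C$ are exactly the three bracketed expressions in \eqref{eq_derj3}: $A=\left[\gamma_1{_tI_T^{1-\a}}\partial_3 L - \gamma_2{_TI_t^{1-\b}}\partial_3 L+\partial_2\phi\right]_{t=T}$, $B=\left[L+\partial_1\phi+\partial_2\phi\,x'\right]_{t=T}$, and $C$ the endpoint term at $t=b$. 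The only new feature of Theorem~\ref{teo2} is that the transversality conditions are to be expressed through the endpoint increment $\Delta x_T$ of \eqref{xt} rather than through $h(T)$ directly; the Euler--Lagrange equations \eqref{ELeq_1}--\eqref{ELeq_2} are unchanged, being derived from the interior variations exactly as before.

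First I would relate $\Delta x_T$ to the variation data. Applying Taylor's formula to $(x+h)(T+\Delta T)$ about $t=T$ and retaining only the first-order terms gives $\Delta x_T = h(T)+x'(T)\,\Delta T$, modulo higher-order contributions that do not enter the first-variation identity \eqref{eq_derj3}. Solving for $h(T)$ yields $h(T)=\Delta x_T - x'(T)\,\Delta T$, which is the substitution that trades the $h(T)$-dependence for a $\Delta x_T$-dependence.

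Next I would insert $h(T)=\Delta x_T-x'(T)\,\Delta T$ into \eqref{eq_derj3} and regroup by the quantities $\Delta x_T$, $\Delta T$, and $h(b)$. The coefficient of $\Delta x_T$ is exactly $A$, giving the second line of \eqref{CT2}; the coefficient of $\Delta T$ becomes $B-x'(T)\,A$, where the two $\partial_2\phi\,x'(T)$ contributions cancel and the remaining integral terms regroup into the first line of \eqref{CT2}; and the coefficient of $h(b)$ is $C$, giving the third line. Since $\Delta x_T$, $\Delta T$, and $h(b)$ may be chosen independently and arbitrarily by selecting suitable perturbing curves $h$ and time increments $\Delta T$, each coefficient must vanish separately, which produces the three transversality conditions \eqref{CT2}.

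The hard part is the justification of the independence step: one must argue that the three scalars $\Delta x_T$, $\Delta T$, $h(b)$ are genuinely free of one another, i.e. that fixing any two of them still leaves the third arbitrary, so that each coefficient can legitimately be set to zero in turn. Once this independence, together with the first-order expansion $\Delta x_T = h(T)+x'(T)\,\Delta T$, is granted, the remainder is the purely algebraic cancellation of the $x'(T)\,\partial_2\phi$ terms in $B-x'(T)\,A$, and I expect no further analytic difficulty.
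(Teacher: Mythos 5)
Your proposal follows essentially the same route as the paper: starting from the identity \eqref{eq_derj3}, using the first-order expansion of \eqref{xt} to substitute $h(T)=\Delta x_T - x'(T)\,\Delta T$, regrouping so the $x'(T)\,\partial_2\phi$ terms cancel, and invoking independence of $\Delta x_T$, $\Delta T$, $h(b)$. The only cosmetic difference is that the paper explicitly restricts to variations with $h'(T)=0$ to kill the cross term $h'(T)\,\Delta T$ in the Taylor expansion, whereas you absorb it into the higher-order remainder; both devices are legitimate and the arguments are otherwise identical.
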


\begin{proof}
The Euler--Lagrange equations are deduced following
similar arguments as the ones presented in Theorem \ref{teo1}.
We now focus our attention on the proof of the transversality conditions.
Using Taylor's expansion up to first order for a small $\Delta T$,
and restricting the set of variations to those for which $h'(T)=0$, we obtain
$$
(x+h)\left( T + \Delta T \right)=(x+h)(T)+x'(T) \Delta T + O(\Delta T)^{2}.
$$
Rearranging the relation \eqref{xt} allows us to express $h(T)$
in terms of $\Delta T$ and $\Delta x_T$:
$$
h(T)= \Delta x_T - x'(T) \Delta T + O(\Delta T)^{2}.
$$
Substitution of this expression into \eqref{eq_derj3} gives us
\begin{equation*}
\begin{split}
0=& \Delta x_T \left[\gamma_1 \, {_tI_T^{1-\a}\partial_3L[x]_\gamma^{\alpha, \beta}(t)}
-{\gamma_2 \, {_TI_t^{1-\b}\partial_3 L[x]_\gamma^{\alpha, \beta}(t)}}
+\partial_2 \phi(t,x(t))\right]_{t=T}\\
&+\Delta T \left[ L[x]_\gamma^{\alpha, \beta}(t)+\partial_1\phi(t,x(t))\right.\\
&\left.\qquad - x'(t)\left( \gamma_1 \, {_tI_T^{1-\a}\partial_3L[x]_\gamma^{\alpha, \beta}(t)}
-{\gamma_2 \, {_TI_t^{1-\b}\partial_3L[x]_\gamma^{\alpha, \beta}(t)}} \right)\right]_{t=T}\\
&+h(b)\left[\gamma_2\left( _TI_t^{1-\b}\partial_3L[x]_\gamma^{\alpha, \beta}(t)
-{_aI_t^{1-\b}\partial_3L[x]_\gamma^{\alpha, \beta}(t)}\right) \right]_{t=b}
+ O(\Delta T)^{2}.
\end{split}
\end{equation*}
Transversality conditions \eqref{CT2} are obtained
using appropriate choices of variations.
\end{proof}


\section{Particular cases}
\label{sec:part:cases}

Now, we specify our results to three particular
cases of variable terminal points.


\subsection{Vertical terminal line}

This case involves a fixed upper bound $T$. Thus, $\Delta T=0$ and,
consequently, the second term in \eqref{eq_derj3} drops out.
Since $\Delta x_T$ is arbitrary, we obtain the following
transversality conditions: if $T<b$, then
$$
\begin{cases}
\left[ \gamma_1 \, {_tI_T^{1-\a}} \partial_3L[x]_\gamma^{\alpha, \beta}(t)
- \gamma_2 \, {_TI_t^{1-\b}} \partial_3L[x]_\gamma^{\alpha, \beta}(t)\right]_{t=T}
+\partial_2 \phi(T,x(T))=0,\\
 \gamma_2 \left[ _TI_t^{1-\b}\partial_3L[x]_\gamma^{\alpha, \beta}(t)
 -{_aI_t^{1-\b}\partial_3L[x]_\gamma^{\alpha, \beta}(t)}\right]_{t=b}=0;
\end{cases}
$$
if $T=b$, then $\Delta x_T=h(b)$ and the transversality conditions reduce to
$$
\left[\gamma_1 \, {_tI_T^{1-\a}} \partial_3L[x]_\gamma^{\alpha, \beta}(t)
-\gamma_2 \, {_tI_T^{1-\b}\partial_3L[x]_\gamma^{\alpha, \beta}(t)}\right]_{t=b}
+\partial_2 \phi(b,x(b))=0.
$$


\subsection{Horizontal terminal line}

In this situation, we have $\Delta x_T=0$ but $\Delta T$ is arbitrary.
Thus, the transversality conditions are
$$
\begin{cases}
L[x]_\gamma^{\alpha, \beta}(T)+\partial_1\phi(T,x(T))\\
\qquad + x'(T) \left[ \gamma_2 {_TI_t^{1-\b} \partial_3L[x]_\gamma^{\alpha, \beta}(t)}
- \gamma_1 {_tI_T^{1-\a} \partial_3L[x]_\gamma^{\alpha, \beta}(t)} \right]_{t=T} =0,\\
\gamma_2 \left[ _TI_t^{1-\b}\partial_3L[x]_\gamma^{\alpha, \beta}(t)
-{_aI_t^{1-\b}\partial_3L[x]_\gamma^{\alpha, \beta}(t)}\right]_{t=b}=0.
\end{cases}
$$


\subsection{Terminal curve}

Now the terminal point is described by a given curve
$\psi:C^{1}([a,b])\rightarrow \mathbb{R}$,
in the sense that $x(T)=\psi (T)$. From Taylor's formula,
for a small arbitrary $\Delta T$, one has
$$
\Delta x(T)=\psi'(T) \Delta T+ O(\Delta T)^{2}.
$$
Hence, the transversality conditions are presented in the form
\begin{equation*}
\begin{cases}
L[x]_\gamma^{\alpha, \beta}(T)+\partial_1\phi(T,x(T))+ \partial_2 \phi(T, x(T))\psi'(T)\\
\ \ +\left(  x'(T)-\psi'(T)\right)
\left[ \gamma_2 \, {_TI_t^{1-\b} \partial_3L[x]_\gamma^{\alpha, \beta}(t)}
- \gamma_1 \, {_tI_T^{1-\a} \partial_3L[x]_\gamma^{\alpha, \beta}(t)} \right]_{t=T} =0,\\
\gamma_2 \left[ _TI_t^{1-\b} \partial_3L[x]_\gamma^{\alpha, \beta}(t)
-{_aI_t^{1-\b}\partial_3L[x]_\gamma^{\alpha, \beta}(t)}\right]_{t=b}=0.
 \end{cases}
\end{equation*}


\section{Examples}
\label{sec:ex}

In this section we show two examples for the main result of the paper.
Let $\alpha(t,\t)=\alpha(t)$ and $\beta(t,\t)=\beta(\t)$ be two functions
depending on a variable $t$ and $\t$ only, respectively.
Consider the following fractional variational problem: to minimize the functional
$$
\mathcal{J}(x,T)=\int_0^T\left[2 \alpha(t) -1+\left({^CD_\gamma^{\alpha(\cdot),\beta(\cdot)}} x(t)
-\frac{t^{1-\alpha(t)}}{2\Gamma(2-\alpha(t))}-\frac{(10-t)^{1-\beta(t)}}{2\Gamma(2-\beta(t))}\right)^2\right]dt
$$
for $t\in[0,10]$, subject to the initial condition $x(0)=0$ and where $\gamma = (\gamma_1,\gamma_2)=(1/2,1/2)$.
Simple computations show that for $\overline{x}(t)=t$, with $t\in[0,10]$, we have
$$
\DC \overline{x}(t)=\frac{t^{1-\alpha(t)}}{2\Gamma(2-\alpha(t))}+\frac{(10-t)^{1-\beta(t)}}{2\Gamma(2-\beta(t))}.
$$
For $\overline{x}(t)=t$ the functional reduces to
$$
\mathcal{J}(\overline{x},T)=\int_0^T\left(2 \alpha(t) -1\right)dt.
$$
In order to determine the optimal time $T$, we have to solve the equation
$2 \alpha(T) = 1$. For example, let $\alpha(t) = t^2/2$. In this case,
since $\mathcal{J}(x,T)\geq-2/3$ for all pairs $(x,T)$ and $\mathcal{J}(\overline{x},1)=-2/3$,
we conclude that the (global) minimum value of the functional is $-2/3$, obtained for $\overline{x}$ and $T=1$.
It is obvious that the two Euler--Lagrange equations \eqref{ELeq_1} and \eqref{ELeq_2}
are satisfied when $x=\overline{x}$, since
$$
\partial_3L[\overline{x}]_\gamma^{\alpha, \beta}(t)=0
\quad \mbox{for all} \quad t\in[0,10].
$$
Using this relation, together with
$$
L[\overline{x}]_\gamma^{\alpha, \beta}(1)=0,
$$
the transversality conditions \eqref{CT1} are also verified.

For our last example, consider the functional
$$
\mathcal{J}(x,T)=\int_0^T\left[2 \alpha(t) -1+\left({^CD_\gamma^{\alpha(\cdot),\beta(\cdot)}} x(t)
-\frac{t^{1-\alpha(t)}}{2\Gamma(2-\alpha(t))}-\frac{(10-t)^{1-\beta(t)}}{2\Gamma(2-\beta(t))}\right)^3\right]dt,
$$
where the remaining assumptions and conditions are as in the previous example.
For this case, $\overline{x}(t)=t$ and $T=1$ still satisfy
the necessary optimality conditions. However, it is now not obvious
that $(\overline x,1)$ is a local minimizer to the problem.


\section*{Acknowledgments}

This work is part of first author's Ph.D., which is carried out at the
University of Aveiro under the Doctoral Programme
\emph{Mathematics and Applications} of Universities of Aveiro and Minho.
It was supported by Portuguese funds through the
\emph{Center for Research and Development in Mathematics and Applications} (CIDMA),
and \emph{The Portuguese Foundation for Science and Technology} (FCT),
within project PEst-OE/MAT/UI4106/2014. Tavares was also supported
by FCT through the Ph.D. fellowship SFRH/BD/42557/2007;
Torres by EU funding under the 7th Framework Programme FP7-PEOPLE-2010-ITN,
grant agreement number 264735-SADCO, and by project PTDC/EEI-AUT/1450/2012,
co-financed by FEDER under POFC-QREN with COMPETE reference FCOMP-01-0124-FEDER-028894.
The authors are grateful to three referees
for valuable remarks and comments, which
significantly contributed to the quality of the paper.



\end{document}